\documentclass{amsart}
\usepackage[T1]{fontenc}
\usepackage[a4paper,lmargin={2.3cm},rmargin={2.3cm},tmargin={2.3cm},bmargin = {2.6cm}]{geometry}
\usepackage{xcolor}
\usepackage{amssymb}
\usepackage{amsthm}
\usepackage{graphicx}
\usepackage{enumerate}
\usepackage{amsmath}
\usepackage{dsfont}
\usepackage[mathscr]{euscript}
\usepackage{mathrsfs}
\usepackage{aligned-overset}
\usepackage{bbm}
\usepackage{shuffle}
\usepackage[colorlinks=true, 
linkcolor=OceanBlue, 
filecolor=magenta,       
urlcolor=MildBlue, 
citecolor=orange, 
]{hyperref} 
\urlstyle{same}
\definecolor{OceanBlue}{HTML}{014f86}  
\definecolor{rost}{HTML}{9d0208}    
\definecolor{MildBlue}{HTML}{457b9d}  
\definecolor{ForestGreen}{HTML}{386641}
\definecolor{orange}{HTML}{fb5607} 


\usepackage[backend=biber, style=alphabetic, natbib=true, hyperref=true, giveninits=true, maxbibnames=99]{biblatex}
\setcounter{biburlnumpenalty}{9000}
\addbibresource{../../../literatur.bib}

\usepackage{csquotes}


\theoremstyle{plain}
\newtheorem{lem}{Lemma}[section]
\newtheorem{theo}[lem]{Theorem}
\newtheorem{cor}[lem]{Corollary}
\newtheorem{defi}[lem]{Definition}

\theoremstyle{definition}
\newtheorem{rem}[lem]{Remark}
\newtheorem{exmp}[lem]{Example}

\usepackage{xargs}  
\usepackage[colorinlistoftodos,prependcaption,textsize=tiny]{todonotes}
\newcommandx{\add}[2][1=]{\todo[disable,inline, linecolor=ForestGreen,backgroundcolor=ForestGreen!25,bordercolor=ForestGreen,#1]{#2}}
\newcommandx{\unsure}[2][1=]{\todo[inline, linecolor=rost,backgroundcolor=rost!25,bordercolor=rost,#1]{#2}}

\newcommand{\one}{\mathbf{1}}

\newcommand{\dd}{\mathop{}\!\mathrm{d}}

\newcommand{\partition}{\mathcal{P}}

\newcommand{\id}{\mathrm{id}}

\newcommand{\Xb}{\mathrm{\mathbf{X}}}

\newcommand{\lb}{\left(}
\newcommand{\rb}{\right)}

\newcommand{\vertiii}[1]{{\left\vert\kern-0.25ex\left\vert\kern-0.25ex\left\vert #1 \right\vert\kern-0.25ex\right\vert\kern-0.25ex\right\vert}}

\newcommand{\qv}[2]{\left[#1\right]_{#2}}
\newcommand{\sig}[2]{\mathrm{Sig}{(#1)}_{#2}}
\newcommand{\proj}{\mathcal{P}}
\newcommand{\sym}[1]{\mathrm{Sym}\left(#1\right)}
\newcommand{\A}{\mathcal A}
\newcommand{\la}{\langle}
\newcommand{\ra}{\rangle}

\DeclareMathOperator{\borel}{\EuScript{B}}

\newcommand{\X}{\mathbb{X}}


\DeclareMathOperator{\R}{\mathbb{R}}

\DeclareMathOperator{\N}{\mathbb{N}}

\title{Rough Functional It\^o Formula}
\author{Franziska Bielert}
\address{Institut f\"ur Mathematik, Technische Universit\"at, Berlin}
\date{}

\begin{document}
\begin{abstract}
We prove a rough It\^o formula for path-dependent functionals of $\alpha$-H\"older continuous paths for $\alpha\in(0,1)$. Our approach combines the sewing lemma and a Taylor approximation in terms of path-dependent derivatives.
\end{abstract}
\maketitle

\section{Introduction}
In \parencite{dupireFunctionalItoCalculus2009} Dupire developed an It\^o calculus for causal functionals $F$, i.e., functionals that depend at time $t\in[0,T]$ on a path $X\colon [0,T] \to \R$ up to time $t$. He introduced suitable notions of directional derivatives. A `time' derivative $DF$ by a perturbation of time $t+h$ with stopped paths $X_t$ and a `space' derivative $\nabla F$ by fixing the time and perturbing of the end point of the stopped path $X_t + h\one_{[t,T]}$. Similar results were established in a number of papers by R. Cont and D.A. Fourni\'e using purely analytical arguments for paths $X$ that have finite quadratic variation in a pathwise sense. A thorough treatment can be found in \parencite{BallyVlad2016Sibp}. In particular they proved a pathwise functional It\^o formula using F\"ollmer type integrals introduced in \parencite{Föllmer1981}.

Afterwards the pathwise It\^o formula for path-dependent functionals (as well as for standard functions) was extended to paths $X$ with arbitrary regularity by R. Cont and N. Perkowski in \parencite{contPathwiseIntegrationChange2019}. For non path-dependent functions they also investigated the relation to rough path theory. By identifying a natural candidate for the reduced rough path $\Xb$ induced by a multidimensional path $X$, it was shown that the F\"ollmer integral in the pathwise It\^o formula coincides with a rough integral. 

The main result of this paper is Theorem \ref{theo:roughfuncInt}. It constructs a rough integral $\int \nabla F(t, X) \dd\Xb(t)$ for multidimensional $\alpha$-H\"older continuous paths for $\alpha\in(0,1)$ and provides a rough functional It\^o formula under suitable regularity assumptions on the causal functional $F$.  

Similar results for less regular functionals $F$ and cadlag paths $X$ with finite $p$-variation have been obtained independently by Christa Cuchiero, Xin Guo and Francesca Primavera and are to be published in \parencite{cuchieroFunctionalItoformulaTaylor}. Their proof relies on a density argument, passing from linear functions of the
signature of the path to general path functionals.

This work was intended to give a simple proof that follows the standard approach in rough path theory. So we will allow for strong regularity assumptions on $F$ and apply the sewing lemma. 
Namely Corollary \ref{cor:remainderestimate} gives an error bound for higher order Taylor approximations of $F(t, X)$ in terms of the causal derivatives. This is a generalization of Lemma 2.2 from A. Ananova and R. Cont in \parencite{ananovaPathwiseIntegrationRespect2017}. The higher order Taylor approximation allows to adapt the techniques in \parencite{contPathwiseIntegrationChange2019} to the path-dependent setting.

\subsection{Notation}
Let $T>0$ and $D$ denote the set of cadlag paths $X\colon [0, T] \to \R^d$ equipped with the uniform norm $\vert \cdot \vert_\infty$. For such paths and $t\in [0,T]$ we denote by $X(t)$ the value of the path at time $t$ and by $X_t$ the stopped path $X_t = X(\cdot \wedge t)$. Let further $X_{t-}$ denote the path $X$ stopped right before $t$, namely for $u\in[0,T]$, $X_{t-}(u) = X(u)\one_{[0,t)}(u) + \lim_{r\uparrow t}X(r) \one_{[t, T]}(u)$ .

Let $\Delta_T:= \{ (s,t)\in [0,T]\times [0,T]\colon 0\le s\le t\le T\}$.
We call $\partition = \{ [t_{k-1}, t_k]\colon k=1,\dots,n\}$ with $t_k\in[0,T]$ for all $k=0,\dots,n$, \textit{partition of $[0,T]$} if $0=t_0< t_1<\dots < t_n = T$. The \textit{mesh} of a partition $\partition$ is defined as $\vert\partition\vert = \max_{[s,t]\in\partition} \vert t-s\vert$. \index{interval@$\vert\partition\vert$}

For $\alpha\in(0,1)$, a two-parameter path $\Xi\colon\Delta_T\to \R^d$ is $\alpha$-H\"older continuous if 
\begin{align*}
\vert \Xi\vert_\alpha := \sup_{\substack{(s,t)\in\Delta_T,\\ s<t}} \frac{\vert\Xi(s,t)\vert}{\vert t-s\vert^\alpha} < \infty,
\end{align*}
here $\vert \cdot\vert$ denotes the euclidean norm.
Then a path $X\colon [0,T]\to \R^d$ is $\alpha$-H\"older continuous if its increments $(\delta X)(s,t):= X(t) - X(s)$ are.

For two terms $x,y$ we abbreviate the existence of some constant $C>0$ such that $x\le C y$ to $x\lesssim y$ and by $\lesssim_p$ we indicate a dependency $C= C(p)$ on some parameter $p$.\index{$\lesssim$}

\subsection{Causal Derivatives}
Following \parencite{dupireFunctionalItoCalculus2009} and \parencite{oberhauserExtensionFunctionalIto2012} (from where we took the present definitions), we consider for \textit{causal} functionals $F\colon\ [0,T]\times D \to \R$, i.e., 
$F(t, X) = F(t, X_t)$,  the following notions of differentiability:

\begin{defi}[Causal Space Derivative]\label{defi:D}
	If for all $(t,X)\in [0,T]\times D$ the map
	\begin{align*}
		\R^d\ni h\mapsto F(t, X_t + h \one_{[t,T]})
	\end{align*}
	is continuously differentiable at $h = 0$ we say that $F$ has a causal space derivative. We denote it by $\nabla F(t,X) = (\partial_1 F(t,X), \dots, \partial_n F(t,X))$. Similarly, we define for $n\in\N$ the $n$th causal space derivative and denote it by $\nabla^n F$.
\end{defi} 
\begin{defi}[Causal Time Derivative]\label{defi:nabla}
	If for all $(t,X)\in [0,T]\times D$ the map 
	\begin{align*}
		[0,\infty)\ni h \mapsto F(t + h, X_t)
	\end{align*}
is continuous and right-differentiable at $h=0$ we denote this derivative by $D F(t, X)$. If additionally $t\mapsto D F(t, X)$ is Riemann integrable, then we say that $F$ has a causal time derivative.
\end{defi}

For $n\in\N$ we write $F\in\mathbb{C}^{1,n}_b$, if $F$ has a causal time derivative and $n$ causal space derivatives such that $F$, $DF$ and for $k=1, \dots, n$, $\nabla^k F$ are continuous in $[0,T]\times D$ and bounded in the sense that $\sup_{(t, X)\in[0,T]\times D} \vert F(t, X)\vert <  \infty$. 
We refer to \parencite[Definition 19]{oberhauserExtensionFunctionalIto2012} for weaker regularity notions. Since the purpose of this paper is to give a simple proof of a rough functional It\^o formula, we keep the assumptions simple. 

\section{Taylor Approximation for Causal Functionals}

To derive a Taylor formula for $t \mapsto F(t, X)$ we use the signature $\sig{X}{}$ of a paths that have bounded variation. 
We briefly specify the (for us necessary) theory.

\subsection{Symmetric Part of the Signature of a Path}
Set $T_0(\R^d) := 1$ and for $k\in\N$ , $T_k(\R^d) := (\R^d)^{\otimes k}$ the space of $k$-tensors and $T(\R^d) = \bigoplus_{k=0}^\infty T_k(\R^d)$ the \textit{tensor algebra}. 
A \textit{word} $w$ in the alphabet $\A := \lbrace 1, \dots d\rbrace$ of length $k$ is a tuple $(w_1, \dots, w_k)$ such that for $j=1, \dots, k$, $w_j \in \A$. 
Denote for $i=1, \dots, d$ by $e_i := (0, \dots, 0, 1, 0, \dots, 0)$ the $i$th unit vector and $e_w := e_{w_1}\otimes \dots, \otimes e_{w_k}$. Then the set $\lbrace e_w\colon w \text{ word in } \A \text{ of length }k\rbrace$ is a basis of $T_k(\R^d)$. 
We write $\langle\cdot, \cdot\rangle$ for the natural inner product in $T_k(\R^d)$. 
Abusing the notation a bit, we also write for $T\in T_k(\R^d)$, $S\in T_m(\R^d)$ with $m<k$, $\la T, S\ra \in T_{m-k}$, where for $h\in T_{k-m}(\R^d)$, $\la T, S\ra (h) := \la T, S \otimes h\ra$. Finally note that we can choose \textit{compatible} norms $\vert \cdot\vert$ on $T_k(\R^d)$, i.e. for $v_1, \dots, v_k\in\R^d$, 
\begin{align*}
\vert v_1 \otimes \dots \otimes v_k\vert \le \prod_{j=1}^k \vert v_j\vert.
\end{align*}
Let $\proj_k$ denote the projection from $T(\R^d)$ onto $T_k(\R^d)$. This extends naturally to infinite series $T((\R^d)):= \prod_{k=0}^\infty T_k(\R^d)$.\\

Let further $X\colon [0,T] \to \R^d$ be continuous and of bounded variation, i.e.\ there exists finite signed measures $\mu^i\colon \borel([0,T]) \to \R$, such that for all $t\in[0,T]$, $\mu^i([0,t]) = X^i(t)$.
Then the \textit{signature} is a two-parameter path $ \sig{X}{}\colon \Delta_T \to T((\R^d))$, where for every $(s,t)\in\Delta_T$, $k\in\N$,
\begin{align*}
	\proj_k \sig{X}{s,t} = \sum_{\substack{w = (w_1, \dots, w_k),\\
			w_j\in\A}} \langle \sig{X}{s,t}, e_w\rangle e_w,
\end{align*}
with
\begin{align}\label{defi:signature}
	\langle \sig{X}{s,t}, e_w\rangle := \int_s^t \int_s^{s_k} \dots \int_s^{s_2} \dd X^{w_1}(s_1) \dots \dd X^{w_k}(s_k).
\end{align}
The \textit{symmetric part} $\sym{T}$ of a $k$-tensor $T$ is given via 
\begin{align}\label{defi:sympart}
	\langle \sym{T}, e_w\rangle = \frac{1}{k!} \sum_{\sigma\in \mathfrak{S}_k} \langle T, e_{(w_{\sigma1}, \dots, w_{\sigma k})}\rangle,
\end{align}  
where $\mathfrak{S}_k$ denote the permutation group of degree $k$, see \parencite[Chapter 4, $\oint$5]{kostrikinLinearAlgebraGeometry1997} for an introduction.
We next define a commutative product on tensors indexed by words. Let  $m, k_1,\dots, k_m\in\N$ and $k= \sum_{j=1}^m k_j$. The \textit{shuffles} $\mathrm{sh}(k_1,\dots, k_m)$ of words of length $ k_1,\dots, k_m$ are those permutations $\sigma\in\mathfrak{S}_k$ such that $\sigma 1 < \dots < \sigma k_1$, $\sigma (k_1+1) < \dots <  \sigma (k_1+k_2)$ and so on. Then for a word $w$ of length $k$, we define
\begin{align*}
e_{(w_1, \dots, w_{k_1})} \shuffle \dots \shuffle e_{(w_{k - k_m + 1}, \dots, w_{k})} = \sum_{\sigma\in\mathrm{sh}(k_1, \dots, k_m)} e_{(w_{\sigma 1}, \dots, w_{\sigma k})}.
\end{align*}
Note that for letters $w_1, \dots, w_k\in\A$, this reduces to
\begin{align}\label{shuffleletters}
	 e_{w_1}\shuffle \dots \shuffle e_{w_k} = \sum_{ \sigma\in\mathfrak{S}_k} e_{(w_{\sigma 1},\dots, w_{\sigma k})}.
\end{align} 
It is easy to check that the signature has the remarkable property that for two words $w$ and $u$ it holds that
\begin{align}\label{shufflesignature}
	\langle \sig{X}{s,t}, e_w\shuffle e_u \rangle = \langle \sig{X}{s,t}, e_w\rangle \langle \sig{X}{s,t}, e_u\rangle, 
\end{align}
compare \parencite[Exercise 2.2]{RoughBook}. We deduce that the symmetric part of the $k$th level signature is 
\begin{align}\label{symsignature}
	\sym{\proj_k \sig{X}{s,t}} = \frac{1}{k!} (X(t) - X(s))^{\otimes k},
\end{align}
since it follows from \eqref{defi:sympart}, \eqref{shuffleletters}, \eqref{shufflesignature} and \eqref{defi:signature} for a word $w$ of length $k$ that
\begin{align*}
	\langle \sym{\proj_k \sig{X}{s,t}}, e_w \rangle 
	&=\frac{1}{k!} \sum_{\sigma\in \mathfrak{S}_k} \langle \sig{X}{s,t}, e_{(w_{\sigma1}, \dots, w_{\sigma k})}\rangle
	= \frac{1}{k!} \langle \sig{X}{s,t}, e_{w_1}\shuffle \dots \shuffle e_{w_k}\rangle\\
	&= \frac{1}{k!} \prod_{j=1}^k \langle \sig{X}{s,t}, e_{w_j} \rangle = \frac{1}{k!} \prod_{j=1}^k (X^{w_j}(t) - X^{w_j}(s)).
\end{align*}
Finally we point out that \parencite[Proposition 3.5]{frizUnifiedSignatureCumulants2021} shows that the symmetric part of the signature satisfies 
\begin{align}\label{ddsymsig}
\sym{\proj_k \sig{X}{s,t}} = \sym{\int_s^t \sym{\proj_{k-1} \sig{X}{s,r}} \otimes \dd X(r)}.
\end{align}
\add{Proof basically just $\dd\sig{X}{} = \sig{X}{}\otimes \dd X$ and so
\[\sym{\int_s^t \sym{\proj_{k-1} \sig{X}{s,r}} \otimes \dd X(r)} = \sym{\int_s^t \proj_{k-1} \sig{X}{s,r}\otimes \dd X(r)} = \sym{\proj_{k} \sig{X}{s,t}}. 
\]}

\subsection{Taylor Formula for Causal Functionals}

The first result establishes a Taylor formula in terms of path-dependent derivatives for paths of bounded variation. It is based on the Taylor expansion of one-dimensional and piecewise constant paths $X$ that is used in \parencite{BallyVlad2016Sibp, contPathwiseIntegrationChange2019} to prove the functional It\^o formula with F\"ollmer integrals. It will prove very useful to have an explicit representation of the remainder.

\begin{theo}[Taylor Formula for Functionals of Bounded Variation Paths]\label{theo:taylor}
Let $n, d\in\N$ and $F\in\mathbb{C}^{1,n}_b$, such that for $k=1, \dots, n-1$, $\nabla^k F\in\mathbb{C}^{1,1}_b$. Then it holds for every path $X\colon [0,T]\to \R^d$ that is continuous and of bounded variation and every $(s,t)\in\Delta_T$, that 
\begin{align}
F(t, X) - F(s, X)&= \sum_{k=0}^{n-1} \frac{1}{k!}\int_s^t \la D\nabla^k F(u, X), (X(t) - X(u))^{\otimes k} \ra\dd u + \sum_{k=1}^{n-1} \frac{1}{k!} \la\nabla^k F(s, X),(X(t) - X(s))^{\otimes k} \ra\notag\\
&\quad + \frac{1}{(n-1)!} \int_s^t \langle \nabla^n F(u, X),(X(t) - X(u))^{\otimes n-1}\otimes\dd X(u)\rangle .\label{theo:taylor:remainder}
\end{align}
\end{theo}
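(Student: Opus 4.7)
The strategy is induction on $n$. The base case $n=1$ reduces to the classical Dupire functional It\^o formula for continuous bounded variation paths,
\begin{align*}
F(t,X) - F(s,X) = \int_s^t DF(u,X)\,\dd u + \int_s^t \la \nabla F(u,X),\dd X(u)\ra,
\end{align*}
which I would derive by choosing a partition $\partition$ of $[s,t]$, decomposing each increment $F(t_{k+1},X)-F(t_k,X)$ into a pure time-shift on the frozen path (contributing $\int DF\,\dd u$) followed by an endpoint perturbation at fixed time (contributing $\int\la\nabla F,\dd X\ra$), and passing to the mesh limit using uniform continuity of $DF$, $\nabla F$ together with the finite total variation of $X$.

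For the inductive step, only the remainder
\begin{align*}
R_n := \frac{1}{(n-1)!}\int_s^t \la \nabla^n F(u,X),(X(t)-X(u))^{\otimes n-1}\otimes \dd X(u)\ra
\end{align*}
differs between the level-$n$ and level-$(n+1)$ versions of the formula, so it is enough to verify
\begin{align*}
R_n = \frac{1}{n!}\la \nabla^n F(s,X),(X(t)-X(s))^{\otimes n}\ra + \frac{1}{n!}\int_s^t \la D\nabla^n F(u,X),(X(t)-X(u))^{\otimes n}\ra\dd u + R_{n+1}.
\end{align*}
I would obtain this by applying the classical product rule for continuous bounded variation paths componentwise to $G(u):=\nabla^n F(u,X)$ and $H(u):=(X(t)-X(u))^{\otimes n}/n!$. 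Since $H(t)=0$, the boundary contribution is $-\la G(s),H(s)\ra$, which matches the first term on the right. Computing $\dd H$ by Leibniz's rule on the $n$ tensor factors of $(X(t)-X(u))^{\otimes n}$ and contracting against the symmetric tensor $\nabla^n F(u,X)$ collapses all $n$ resulting summands into a single one, so that $\int_s^t\la G,\dd H\ra=-R_n$. The differential $\dd G$ is supplied by the base case applied componentwise to $\nabla^n F\in\mathbb{C}^{1,1}_b$, available under the level-$(n+1)$ hypothesis, namely $\dd G(u) = D\nabla^n F(u,X)\,\dd u + \la \nabla^{n+1}F(u,X),\dd X(u)\ra$, and its contraction with $H$ reproduces exactly the remaining time integral and the new remainder $R_{n+1}$.

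The main point to justify is the symmetry of $\nabla^n F$ as an element of $T_n(\R^d)$, required to merge the $n$ Leibniz contributions arising in $\dd H$; for scalar $F$ this is a consequence of Schwarz's theorem applied iteratively to the causal space derivative. A cleaner alternative, more in line with the signature formalism set up above, would replace $(X(t)-X(u))^{\otimes n}/n!$ by $\sym{\proj_n\sig{X}{u,t}}$, differentiate in $u$ directly via \eqref{ddsymsig}, and absorb the symmetrisation into the natural symmetry of the iterated derivative; the tensor-valued integration by parts itself then reduces coordinatewise to the scalar product rule and presents no additional difficulty.
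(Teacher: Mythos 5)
Your inductive step is correct but takes a genuinely different route from the paper. The paper substitutes the level-one formula for $\nabla^n F(u,X)-\nabla^n F(s,X)$ into the remainder, applies Fubini, and then computes $\sym{\int_s^t(X(t)-X(u))^{\otimes n-1}\otimes\dd X(u)}$ explicitly via a binomial expansion and the signature identities \eqref{symsignature}, \eqref{ddsymsig}, closing with the combinatorial identity $\sum_{l=0}^{n-1}(-1)^{n-l-1}\binom{n}{l}=1$. Your Stieltjes integration by parts applied to $u\mapsto\la\nabla^n F(u,X),(X(t)-X(u))^{\otimes n}\ra/n!$ reaches the same recursion for $R_n$ with no signature combinatorics: the symmetry of $\nabla^n F$ (Schwarz, exactly as invoked in the paper) collapses the $n$ Leibniz terms of $\dd H$ into $-R_n$, the boundary term produces the new $k=n$ summand, and $\dd G$ is supplied by the componentwise base case just as in the paper. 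The hypotheses guarantee $G$ is continuous and of bounded variation, so the product rule is legitimate. This is arguably the cleaner argument; the paper's detour through the symmetric part of the signature buys consistency with the formalism of Section 2 but nothing essential.

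The base case as you sketch it has a gap. For a general continuous path $X$ the increment $F(t_{k+1},X_{t_{k+1}})-F(t_k,X_{t_k})$ does \emph{not} decompose exactly into a time shift of the frozen path followed by an endpoint perturbation: the functional sees all of $X$ on $(t_k,t_{k+1})$, whereas the frozen-and-bumped path $X_{t_k}+(X(t_{k+1})-X(t_k))\one_{[t_{k+1},T]}$ is constant there. The resulting error on each subinterval is controlled only by the modulus of continuity of $F$ in the sup norm evaluated at the oscillation of $X$ over that subinterval, and the sum of $N$ such errors need not vanish as the mesh tends to zero. The paper's (and Cont--Fourni\'e's) remedy is to run the telescoping sum along the piecewise constant approximation $X^{\partition}$, for which the decomposition \emph{is} exact, and to use continuity of $F$ on $D$ only in the two boundary terms via $F(t,X^{\partition}_{t-})\to F(t,X_t)$ and $F(s,X^{\partition}_{s-})=F(s,X_s)$. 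You need this discretisation of the path (not merely uniform continuity of $DF$ and $\nabla F$) for the base case to close.
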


\begin{proof}
The proof is by induction on $n$.

Note that in the case that $d=1$, the result follows for  $n=1$ from \parencite[Theorem 1.10]{contPathwiseIntegrationChange2019} applied with $p=2$ and $\qv{X}{}^2 = 0$ with higher regularity assumption on $F$. For consistency we give the start of the induction with minor changes due to $d\ge 1$ and $X$ more regular.

For $n=1$ let $(\partition)$ be a sequence of partitions of $[s,t]$ with $\vert\partition\vert \to 0$. We consider the piecewise constant approximation of $X$ on $[s,t]$:
\begin{align*}
X^{\partition}(u) = X(u)\one_{[0,s)}(u) + \sum_{[t_j, t_{j+1}]\in\partition} X(t_{j+1})\one_{[t_j, t_{j+1})} + X(t)\one_{[t, T]}. 
\end{align*}
Since $X^{\partition} \to X$ uniformly and $X^{\partition}_{t-} = X^{\partition}_t$, $X^{\partition}_{s-} = X_s$, it holds 
\begin{align}
F(t, X) - F(s,X) = \lim_{\vert\partition\vert\to 0} \sum_{[t_j, t_{j+1}]\in\partition} F(t_{j+1}, X^{\partition}_{t_{j+1}-}) - F(t_j,X^{\partition}_{t_j-}).\label{theo:taylor:eq:1}
\end{align} 
Noting that $X^{\partition}_{t_j} = X^{\partition}_{t_{j+1}-}$ on $[0, t_{j+1}]$ and $X^{\partition}_{t_j} = X^{\partition}_{t_j-} + (X(t_{j+1}) - X({t_j}))\one_{[t_j, T]}$ we decompose the difference into the time and space perturbation,
\begin{align}
&F(t_{j+1}, X^{\partition}_{t_{j+1}-}) - F(t_j,X^{\partition}_{t_j-})\notag\\
&= F(t_{j+1}, X^{\partition}_{t_j}) - F(t_j, X^{\partition}_{t_j})  + F(t_j, X^{\partition}_{t_j-} + (X(t_{j+1}) - X({t_j}))\one_{[t_j, T]}) - F(t_j,X^{\partition}_{t_j-}).\label{theo:taylor:eq:2}
\end{align}
By construction of the causal time derivative $[t_j, t_{j+1})\ni u \mapsto F(u, X^{\partition}_{t_j})$ is right-differentiable with Riemann integrable derivatives, thus by the fundamental theorem of calculus, cf. \parencite{botskoStrongerVersionsFundamental1986}, it holds 
\begin{align*}
\sum_{[t_j, t_{j+1}]\in\partition} F(t_{j+1}, X^{\partition}_{t_j}) - F(t_j, X^{\partition}_{t_j})
= \sum_{[t_j, t_{j+1}]\in\partition} \int_{t_j}^{t_{j+1}} D F(u, X^{\partition}_{t_j}) \dd u.
\end{align*}
Since for every $u\in[s,t]$, $\sum_{[t_j, t_{j+1}]\in\partition} F(u, X^{\partition}_{t_j}) \one_{[t_j, t_{j+1})}(u) = F(u, X^{\partition}_u) \to F(u, X_u)$ as $\vert\partition\vert\to 0$ and $DF$ bounded, the last expression converges to $\int_s^t  DF(u, X) \dd u$. Similarly it follows for the space perturbation in \eqref{theo:taylor:eq:2} that
\begin{align*}
&F(t_j, X^{\partition}_{t_j-} + (X(t_{j+1}) - X({t_j}))\one_{[t_j, T]}) - F(t_j,X^{\partition}_{t_j-})\\
&= \int_0^1 \langle\nabla F(t_j, X^{\partition}_{t_j-} + \lambda(X(t_{j+1}) - X({t_j})\one_{[t_j, T]}), (X(t_{j+1}) - X({t_j}))\rangle \dd\lambda\\
&=: \langle\nabla F(t_j,X^{\partition}_{t_j-}), (X(t_{j+1}) - X({t_j}))\rangle + R_j.
\end{align*}
For $i=1,\dots, d$ let now $\mu^i$ denote measures of bounded variation related to component $X^i$. Since $\sum_{[t_j, t_{j+1}]\in\partition} \partial_i F(t_j,X^{\partition}_{t_j-})\one_{[t_j, t_{j+1})}(u) \to \partial_i F(u, X)$ as $\vert\partition\vert\to 0$ and $\partial_i F$ bounded, it follows that
\begin{align*}
\sum_{[t_j, t_{j+1}]\in\partition} \partial_i F(t_j,X^{\partition}_{t_j-}) (X^i(t_{j+1}) - X^i({t_j})) 
&= \int_s^t \sum_{[t_j, t_{j+1}]\in\partition} \partial_i F(t_j,X^{\partition}_{t_j-})\one_{[t_j, t_{j+1})}(u) \dd\mu^i(u)\\
& \to \int_s^t \partial_i F(u, X) \dd \mu^i(u)
= \int_s^t \partial_i F(u, X) \dd X^i(u).
\end{align*}
Moreover using that the images of $(\id,X^{\partition})$ are compact in $[0,T]\times D$, we may assume that $\nabla F$ is compactly supported and therefore uniformly continuous. 
\add{
For every $n\in\N$, $K_n = \{(t_j, X^{\partition}_{t_j-}+ \lambda(X(t_{j+1}) - X(t_j))\one_{[t_j, T]})\colon j = 0, \dots ,n-1,\  \lambda\in[0,1]\}$ is compact: Any sequence in $K_n$ corresponds to a sequence $t_m$ in the finite set $\{t_j\colon j=0,\dots, n-1\}$ and $(\lambda_m)\subset[0,1]$. Then there are note relabeled subsequences with $t_m \to t_j^*$ and $\lambda_m \to\lambda*\in[0,1]$. Clearly for large $m$, $t_m = t_j^*$. Then trivially \[(t_m, X^{\partition}_{t_m-}+ \lambda_m(X(t_{m+1}) - X(t_m)\one_{[t_m, T]}) = (t_j^*, X^{\partition}_{t_j^*-}+ \lambda_m(X(t_{j+1}^*) - X(t_j^*)\one_{[t_j^*, T]}) \to (t_j^*, X^{\partition}_{t_j^*-}+ \lambda^*(X(t_{j+1}^*) - X(t_j^*))\one_{[t_j^*, T]})\in K_n.\]}
Hence the remainders 
\begin{align*}
R_j = \int_0^1 \nabla F(t_j, X^{\partition}_{t_j-} + \lambda(X(t_{j+1}) - X({t_j}))\one_{[t_j, T]}) - \nabla F(t_j,X^{\partition}_{t_j-}) \dd\lambda \cdot (X(t_{j+1}) - X({t_j}))
\end{align*}
satisfy 
\begin{align*}
&\sum_{[t_j, t_{j+1}]\in\partition} \vert R_j\vert 
\le C(\vert \nabla F\vert_\infty, \partition) \sum_{[t_j, t_{j+1}]\in\partition} \vert X(t_{j+1}) - X({t_j})\vert
\le C(\vert \nabla F\vert_\infty, \partition)\vert\mu\vert([s,t]) 
\end{align*}
where $\vert\mu\vert$ denotes the total variation of $\mu$ and $C(\vert \nabla F\vert_\infty, \partition)\to 0$ as $\vert\partition\vert\to\infty$.\\

For $n \to n+1$ we apply the previous result componentwise to get 
\begin{align*}
\nabla^n F(u, X) - \nabla^n F(s, X) = \int_s^u D\nabla^n F(r, X) \dd r + \int_s^u \langle \nabla^{n+1} F(r, X), \dd X(r) \rangle (\cdot).
\end{align*}
Plugging that into the remainder \eqref{theo:taylor:remainder} and using Fubini, it follows
\begin{align}
&\int_s^t  \langle \nabla^n F(u,  X), ( X(t) -  X(u))^{ \otimes n-1} \otimes \dd X(u)\rangle\notag\\
\begin{split}
&= \int_s^t \la D\nabla^n F(r,X),  \int_r^t (X(t) -  X(u))^{\otimes n-1}\otimes \dd X(u)\ra \dd r \\
&\quad  + \int_s^t \la\nabla^{n+1} F(r,X),\int_r^t (X(t) -  X(u))^{\otimes n-1}\otimes\dd X(u) \otimes\dd X(r)\ra\\
&\quad + \la\nabla^n F(s,  X),\int_s^t( X(t) -  X(u))^{\otimes n-1}\otimes\dd X(u)\ra
\end{split}\label{theo:taylor:eq:2a}
\end{align}
For every $r\in[s,t]$ the function $g(h):= F(r, X_r + h\one_{[r,T]})$ is $(n+1)$-times continuously differentiable in zero by assumption. Thus Schwarz' lemma shows that the causal space derivative $\nabla^{n+1} F(r, X) = \nabla^{n+1} g(0)$ is a symmetric tensor (i.e., $\sym{\nabla^{n+1} F(r, X)} = \nabla^{n+1} F(r, X)$). 
Consequently,
\begin{align}\label{theo:taylor:eq:3}
&\la\nabla^n F(s,  X),\int_s^t( X(t) -  X(u))^{\otimes n-1}\otimes\dd X(u)\ra= \la\nabla^n F(s,  X), \sym{\int_s^t( X(t) -  X(u))^{\otimes n-1}\otimes\dd X(u)}\ra.
\end{align}
It holds that
\begin{align}
&\sym{\int_s^t( X(t) -  X(u))^{\otimes n-1}\otimes\dd X(u)}\notag\\
&= \sum_{l=0}^{n-1} (-1)^{n-l-1} \binom{n-1}{l} \sym{(X(t)-X(s))^{\otimes l} \otimes \int_s^t (X(u) - X(s))^{\otimes n-l-1}\otimes \dd X(u)}\label{theo:taylor:eq:4}.
\end{align} 
Recalling \eqref{symsignature} and \eqref{ddsymsig} we deduce
\begin{align*}
\frac{1}{(n-l-1)!}\sym{\int_s^t (X(u) - X(s))^{\otimes n-l-1}\otimes \dd X(u)}
= \frac{1}{(n-l)!} (X(t)- X(s))^{\otimes n-l}.
\end{align*}
And plugging that into \eqref{theo:taylor:eq:4} yields
\begin{align*}
\sym{\int_s^t( X(t) -  X(u))^{\otimes n-1}\otimes\dd X(u)} &= (X(t)-X(s))^{\otimes n} \sum_{l=0}^{n-1} (-1)^{n-l-1} \binom{n-1}{l} \frac{(n-l-1)!}{(n-l)!}\\
&=\frac{(n-1)!}{n!} (X(t)-X(s))^{\otimes n},
\end{align*}
since $\sum_{l=0}^{n-1} (-1)^{n-l-1} \binom{n}{l} = 1$. Using similar arguments for the inner product with $D\nabla^n F$ and $\nabla^{n+1} F$ in \eqref{theo:taylor:eq:2a} we conclude that
\begin{align*}
&\frac{n!}{(n-1)!}\int_s^t  \langle \nabla^n F(u,  X), ( X(t) -  X(u))^{ \otimes n-1} \otimes \dd X(u)\rangle\\
&=\int_s^t \la D\nabla^n F(r,X), ( X(t) -  X(r))^{\otimes n} \ra\dd r +\int_s^t \la \nabla^{n+1} F(r,  X),(X(t) -  X(r))^{\otimes n} \otimes\dd X(r)\\
&\quad  + \la \nabla^n F(s,X),(X(t)-X(s))^{\otimes n} \ra.
\end{align*}
\end{proof}

\begin{rem} Note that it is sufficient that the functional $F$ and its causal derivatives are continuous. As seen in the proof the images $(\id, X^{\partition})$ lie in a compact subset of $[0,T]\times D$ if $X$ is continuous. Then any continuous functional restricted to this compact metric space is uniformly continuous and bounded.  
\end{rem}

The next corollary is a generalization of \parencite[Lemma  2.2]{ananovaPathwiseIntegrationRespect2017}. 
It estimates the error of a lower order Taylor approximation of $F$ composed with an $\alpha$-H\"older continuous path $X\colon [0,T]\to \R^m$. The reader may notice that the previously mentioned result \parencite[Theorem 1.10]{contPathwiseIntegrationChange2019} can be applied to less regular paths using F\"ollmer integrals. But since we want to estimate the remainder \eqref{theo:taylor:remainder} for the next result, we prefer to use integrals against paths of bounded variation.

\begin{cor}[Taylor Approximation of Causal Functionals of H\"older Continuous Paths]\label{cor:remainderestimate}
Let $X\colon [0,T] \to \R^d$ be $\alpha$-H\"older continuous for some $\alpha\in(0,1)$ and $F$ as in Theorem \ref{theo:taylor}. Assume additionally that $F$ and $D F$ are Lipschitz continuous for fixed times with bounded Lipschitz constants. Then it holds for every $(s,t)\in\Delta_T$ with $\vert t-s\vert \le 1$ and $0\le l\le n-1$ that
\begin{align}
\begin{split}\label{cor:remainderestimate:eq}
\Bigg\vert F(t, X) - \int_s^t DF(r,X)\dd r- \sum_{k=0}^l \frac{1}{k!}\la \nabla^k F(s, X),(X(t) - X(s))^{\otimes k} \ra \Bigg\vert\\
\qquad \lesssim \vert t-s\vert^{\alpha + (n-1)\alpha^2} + \vert t-s\vert^{1+\alpha} + \vert t-s\vert^{(l+1)\alpha},
\end{split}
\end{align}
with a constant depending on $n$, $l$, $\vert F\vert_\infty$, $\vert D\nabla^k F\vert_\infty$ and $\vert\nabla^{k+1}F\vert_\infty$ for $k=l+1,\dots, n-1$ as well as $\sup_{r\in[s,t]}\{ Lip(F(r, \cdot), DF(r, \cdot))\}$ and $\vert X\vert_\alpha$.
\end{cor}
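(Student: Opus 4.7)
The strategy is to regularize $X$ by a smooth mollification $X^\eps = X\star\rho_\eps$ (which is now of bounded variation) and apply Theorem \ref{theo:taylor} at order $n$ to $X^\eps$, optimizing the scale $\eps = \vert t-s\vert^{1+(n-1)\alpha}$. Standard estimates yield $\vert X^\eps\vert_\alpha\lesssim\vert X\vert_\alpha$, $\vert X^\eps - X\vert_\infty\lesssim\eps^\alpha\vert X\vert_\alpha$, and $\Vert\dot X^\eps\Vert_\infty\lesssim\eps^{\alpha-1}\vert X\vert_\alpha$.

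Apply Theorem \ref{theo:taylor} to $X^\eps$ and rearrange so that the $k=0$ time-derivative integral cancels with $-\int_s^t DF(r, X^\eps)\,dr$ and the spatial terms of orders $0,\ldots,l$ match the left-hand side of \eqref{cor:remainderestimate:eq} evaluated at $X^\eps$. The remaining quantity splits into
\[
\sum_{k=1}^{n-1}\frac{1}{k!}\int_s^t\langle D\nabla^k F(u, X^\eps), (X^\eps(t)-X^\eps(u))^{\otimes k}\rangle\,du,
\]
bounded by $\vert t-s\vert^{1+\alpha}$ (using Hölder of $X^\eps$ and $\vert t-s\vert\le 1$ to dominate higher powers); $\sum_{k=l+1}^{n-1}\frac{1}{k!}\langle\nabla^k F(s, X^\eps), (X^\eps(t)-X^\eps(s))^{\otimes k}\rangle$, bounded by $\vert t-s\vert^{(l+1)\alpha}$; and the remainder
\[
R^\eps = \frac{1}{(n-1)!}\int_s^t\langle\nabla^n F(u, X^\eps), (X^\eps(t)-X^\eps(u))^{\otimes(n-1)}\otimes dX^\eps(u)\rangle,
\]
estimated by $\vert R^\eps\vert\lesssim\Vert\dot X^\eps\Vert_\infty\cdot\vert t-s\vert^{1+(n-1)\alpha}\lesssim\eps^{\alpha-1}\vert t-s\vert^{1+(n-1)\alpha}$, which with the chosen $\eps$ becomes $\vert t-s\vert^{\alpha+(n-1)\alpha^2}$.

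The passage from $X^\eps$ back to $X$ in the LHS of \eqref{cor:remainderestimate:eq} costs $\lesssim\eps^\alpha = \vert t-s\vert^{\alpha + (n-1)\alpha^2}$ for the $F$, $DF$ and $F(s,\cdot)$ terms by the Lipschitz hypothesis, and $\lesssim\vert t-s\vert^{(k-1)\alpha}\eps^\alpha\le\vert t-s\vert^{\alpha+(n-1)\alpha^2}$ for $k\ge 1$ in the tensor differences $(X(t)-X(s))^{\otimes k}-(X^\eps(t)-X^\eps(s))^{\otimes k}$ by telescoping and $\vert\nabla^k F\vert_\infty$. The residual functional differences $\nabla^k F(s, X) - \nabla^k F(s, X^\eps)$ for $k=1,\ldots,l$ are handled by uniform continuity of $\nabla^k F$ on the relatively compact equicontinuous family $\{X_s^\eps\colon\eps>0\}\cup\{X_s\}$, so that the inequality persists when passing to the limit $\eps\to 0$ along a suitable sequence.

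The main obstacle is the remainder $R^\eps$: its integrand against $dX^\eps$ has a density scaling like $\eps^{\alpha-1}$ and thus diverges as $\eps\to 0$, while the mollification error scales like $\eps^\alpha$. The non-obvious exponent $\alpha+(n-1)\alpha^2 = \alpha\bigl(1+(n-1)\alpha\bigr)$ appearing in the claimed bound is precisely the balance point of these two competing scales, and the argument fails without this matching.
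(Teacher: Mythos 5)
Your overall strategy is the paper's strategy: approximate $X$ on $[s,t]$ by a bounded-variation path, apply Theorem \ref{theo:taylor}, and balance the approximation error $\eps^\alpha$ against the divergent remainder $\eps^{\alpha-1}\vert t-s\vert^{1+(n-1)\alpha}$ at the scale $\eps\approx\vert t-s\vert^{1+(n-1)\alpha}$. The paper does exactly this, but with a piecewise \emph{linear interpolation} $X^{\partition}$ on a uniform partition of $[s,t]$ that satisfies $X^{\partition}_s = X_s$, $X^{\partition}(s)=X(s)$ and $X^{\partition}(t)=X(t)$. That anchoring is not a cosmetic choice: it makes $\nabla^k F(s,X^{\partition})=\nabla^k F(s,X)$ and $(X^{\partition}(t)-X^{\partition}(s))^{\otimes k}=(X(t)-X(s))^{\otimes k}$ hold \emph{exactly}, so the only places where the approximation error enters are $F(t,\cdot)$ and $DF(r,\cdot)$ --- precisely the two functionals for which the corollary assumes Lipschitz continuity.

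Your mollification $X^\eps=X\star\rho_\eps$ destroys this anchoring, and the resulting gap is genuine. The terms $\langle\nabla^k F(s,X)-\nabla^k F(s,X^\eps),(X(t)-X(s))^{\otimes k}\rangle$ for $k=1,\dots,l$ are not covered by any hypothesis: the corollary assumes Lipschitz continuity only of $F$ and $DF$, not of $\nabla^k F$ (that assumption is added only later, in Lemma \ref{lem:YXcontrolled}). Uniform continuity on a compact family gives a modulus $\omega_k(\eps^\alpha)$ with no quantitative rate, and you cannot repair this by ``passing to the limit $\eps\to0$'': as you yourself observe, the remainder $R^\eps$ forces $\eps$ to be \emph{fixed} at the balance point $\vert t-s\vert^{1+(n-1)\alpha}$, so the error $\sum_{k=1}^{l}\omega_k(\eps^\alpha)\vert t-s\vert^{k\alpha}$ survives in your final bound and is not dominated by the claimed right-hand side (whose constant, per the statement, may depend only on the listed sup-norms and Lipschitz constants). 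The same issue, in milder form, affects your telescoping bound for the tensor differences: it relies on $\vert\nabla^k F\vert_\infty$ for $k\le l$, which is available, but the functional-difference term is not. The fix is simply to replace the mollifier by the interpolation that agrees with $X$ on $[0,s]$ and at the partition points of $[s,t]$; then all the problematic differences vanish identically and the rest of your estimates go through essentially as you wrote them.
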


\begin{rem}
We point out the differences of \eqref{cor:remainderestimate:eq} to a typical Taylor approximation. As usual the exponent $(l+1)\alpha$ connected to the $l$ space derivatives used in the approximation. The appearance of $(1+\alpha)$ is due to the path-dependent time derivatives. And finally $\alpha + (n-1)\alpha^2$ due to an approximation of $X$ by piecewise constant paths.
\end{rem}

\begin{proof}
Let $\partition$ be a partition of $[s,t]$ whose subintervals are all of length $\vert\partition\vert$.
Consider a piecewise linear approximation $X^{\partition}$ of $X$ on $[s,t]$ such that $X^{\partition}_s = X_s$, and for every $[u,v]\in\partition$ it holds $X^{\partition}(u)= X(u)$ and $X^{\partition}(v) = X(v)$ and in between $X^{\partition}$ is linearly interpolated. Then $X^{\partition}$ is continuous and on $[s,t]$ of bounded variation. Hence the previous theorem shows that
\begin{align}
& F(t, X^{\partition}) - \int_s^t DF(r,X^{\partition})\dd r- \sum_{k=0}^l \frac{1}{k!}\la \nabla^k F(s, X^{\partition}),(X^{\partition}(t) - X^{\partition}(s))^{\otimes k} \ra\notag\\
\begin{split}
&= \sum_{k=1}^{n-1} \frac{1}{k!}\int_s^t \la D\nabla^k F(r, X^{\partition}), (X^{\partition}(t) - X^{\partition}(r))^{\otimes k}\ra \dd r+ \sum_{k=l+1}^{n-1} \frac{1}{k!} \la\nabla^k F(s, X^{\partition}),(X^{\partition}(t) - X^{\partition}(s))^{\otimes k}\ra\\
&\quad  + \frac{1}{(n-1)!} \int_s^t \langle \nabla^n F(r, X^{\partition}),(X^{\partition}(t) - X^{\partition}(r))^{\otimes n-1}\otimes\dd X^{\partition}(r)\rangle. 
\end{split}\label{cor:remainderestimate:eq:1}
\end{align}
Note that $X^{\partition}$ is also $\alpha$-H\"older continuous  with $\vert X^{\partition}\vert_\alpha \lesssim \vert X\vert_\alpha$
\add{ $u\le v\in [t_j, t_{j+1}]$ it holds $X^{\partition}(v) - X^{\partition}(u)= \frac{v-u}{t_{j+1} - t_j} (X(t_{j+1}) - X(t_j))$, so \[\frac{\vert X^{\partition}(v) - X^{\partition}(u)\vert}{\vert v-u\vert^\alpha} \lesssim_{\vert X\vert_\alpha} \lb \frac{v-u}{t_{j+1} - t_j}\rb^{1-\alpha} \le 1.\]
And for $u\in[t_j, t_{t_j+1}]$, $v\in[t_i, t_{i+1}]$ with $j<i$ it follows that \[\frac{\vert X^{\partition}(v) - X^{\partition}(u)\vert}{\vert v-u\vert^\alpha} \lesssim_{\vert X\vert_\alpha} \left(\frac{t_{j+1}-u}{v-u}\right)^\alpha + \left(\frac{t_i - t_{j+1}}{v-u}\right)^\alpha +  \left(\frac{v - t_i}{v-u}\right)^\alpha \le 3.\] } 
and that for every $[u,v]\in\partition$ it holds on $(u,v)$,
\begin{align}\label{cor:remainderestimate:eq:1a}
\left \vert \frac{\dd X^{\partition}}{\dd r}\right\vert = \left \vert\frac{X(v) - X(u)}{v- u}\right\vert 
\lesssim_{\vert X\vert_\alpha} \vert v-u \vert^{\alpha - 1} = \vert\partition\vert^{\alpha - 1}.
\end{align}
In the case $l\le n-2$, it follows that \eqref{cor:remainderestimate:eq:1} is bounded by 
\begin{align}
	\begin{split}\label{cor:remainderestimate:eq:2}
&\sum_{k=1}^{n-1} \frac{1}{k!} \vert D\nabla^k F\vert_\infty \vert X^{\partition}\vert_\alpha^k\vert t - s\vert^{k\alpha} \vert t-s\vert  + \sum_{k=l+1}^{n-1} \frac{1}{k!} \vert \nabla^k F\vert_\infty \vert X\vert_\alpha^k\vert t-s\vert^{k\alpha} \\
&\quad + \frac{1}{(n-1)!}\vert \nabla^n F\vert_\infty \vert X^{\partition}\vert_\alpha^{(n-1)} \vert t-s\vert^{(n-1)\alpha}\vert\partition\vert^{\alpha - 1} \vert t-s\vert\\
&\lesssim_{n,l, \vert D\nabla^k F\vert_\infty, \vert X\vert_\alpha, \vert \nabla^k F\vert_\infty} \vert t-s\vert^{1+\alpha} + \vert t-s\vert^{(l+1)\alpha} + \vert t-s\vert^{1+ (n-1)\alpha}\vert\partition\vert^{\alpha - 1},
	\end{split}
\end{align}
where we picked in both sums the smallest exponent and restricted to the case $\vert t-s\vert\le 1$.
Since $F(s,X^{\partition}) = F(s,X)$ and $X^{\partition}(t) -X^{\partition}(s) = X(t) - X(s)$ it follows that
\begin{align}
&F(t, X) - \int_s^t DF(r,X)\dd r- \sum_{k=0}^l \frac{1}{k!}\la\nabla^k F(s, X),(X(t) - X(s))^{\otimes k} \ra\notag\\
\begin{split}
&= F(t, X)-F(t, X^{\partition}) - \int_s^t DF(r, X) - DF(r, X^{\partition}) \dd r\\
&\quad +  F(t, X^{\partition}) - \int_s^t DF(r,X^{\partition})\dd r- \sum_{k=0}^l \frac{1}{k!}\la \nabla^k F(s, X^{\partition}),(X^{\partition}(t) - X^{\partition}(s))^{\otimes k} \ra.\label{cor:remainderestimate:eq:3}
\end{split}
\end{align}
Since $F$ and $DF$ are Lipschitz continuous for fixed times it holds that 
\begin{align*}
\vert F(t, X)-F(t, X^{\partition}) \vert \lesssim_{Lip(F(t,\cdot))} \vert X - X^{\partition} \vert_\infty \lesssim_{\vert X\vert_\alpha} \vert \partition\vert^{\alpha}
\end{align*}
and similar
\begin{align*}
\left\vert \int_s^t DF(r, X) - DF(r, X^{\partition}) \dd r \right\vert \lesssim_{\sup_{r\in[s,t]} Lip(DF(r,\cdot))} \vert X - X^{\partition}\vert_\infty \vert t-s\vert \lesssim_{\vert X\vert_\alpha} \vert t-s\vert \vert \partition\vert^{\alpha}.
\end{align*}
Together with estimate \eqref{cor:remainderestimate:eq:2}, we deduced that \eqref{cor:remainderestimate:eq:3} is bounded by a constant depending on $n$, $l$, $Lip(F)$,  $\sup_{r\in[s,t]} Lip(DF(r, \cdot)$, $\vert X\vert_\alpha$, $\vert D\nabla^k F\vert_\infty$, $\vert \nabla^k F\vert_\infty$ times
\begin{align*}
 \vert \partition\vert^{\alpha} + \vert t-s\vert\vert \partition\vert^{\alpha}  + \vert t-s\vert^{1+\alpha} +\vert t-s\vert^{(l+1)\alpha} + \vert t-s\vert^{1+ (n-1)\alpha}\vert\partition\vert^{\alpha - 1}.
\end{align*}
Optimizing the choice of $\vert\partition\vert$, by balancing $\vert \partition\vert^{\alpha} \approx \vert t-s\vert^{1+ (n-1)\alpha}\vert\partition\vert^{\alpha - 1}$, i.e.\ $\vert\partition\vert\approx\vert t-s\vert^{1+(n-1)\alpha}$ we obtain the assertion for $0\le l\le n-2$. Finally note that for $l=n-1$, the second sum in \eqref{cor:remainderestimate:eq:2} is empty, so the RHS is simply $\vert t-s\vert^{1+\alpha}+\vert t-s\vert^{1+ (n-1)\alpha}\vert\partition\vert^{\alpha - 1}$. Nevertheless there is nothing wrong in writing $\vert t-s\vert^{n\alpha}$ in the assertion \eqref{cor:remainderestimate:eq}, since $\alpha + (n-1)\alpha^2 < n\alpha$. 
\add{$\vert\partition\vert\approx\vert t-s\vert^{1+(n-1)\alpha}$, in the sense that the number of subintervals $m=\vert t-s\vert/\vert\partition\vert\in\N$ satisfies \[  \vert t-s\vert^{-(n-1)\alpha} \le m \le 2\vert t-s\vert^{-(n-1)\alpha},\] possible because $\vert t-s\vert^{-(n-1)\alpha}\ge 1$. Then \[\vert\partition\vert^\alpha \le \vert t-s\vert^{\alpha + (n-1)\alpha^2}, \quad \vert\partition\vert^{\alpha-1} \le 2^{1-\alpha} \vert t-s\vert^{-(1+(n-1)\alpha) + \alpha+(n-1)\alpha^2}.\]}
\end{proof}

\section{Rough Functional It\^{o} Formula}\label{sec:RoughIto}

Let $\mathrm{Sym}_k(\R^d)$ denote the subspace of symmetric $k$-tensors on $\R^d$ and $\mathbb{S}_n(\R^d) = \oplus_{k=0}^n \mathrm{Sym}_k(\R^d)$ for their direct sum.
Throughout this section we denote the point evaluation of two-parameter paths $\Xi$ by $\Xi_{s,t} = \Xi(s,t)$. 
For an $\alpha$-H\"older continuous path $X$ and $(s,t)\in\Delta_T$, we set $\X^0_{s,t}:=1$ and for $k\ge1$,
\begin{align}\label{notationlevel}
\X^k_{s,t} := \frac{1}{k!}(X(t) - X(s))^{\otimes k}.
\end{align}
Further we write for their collection $\Xb := (\X^0, \X^1, \dots)$. It was shown in \parencite[Definition 4.6, Lemma 4.7]{contPathwiseIntegrationChange2019} that for every $k\ge 1$, $\X^k\colon \Delta_T \to \mathrm{Sym}_k(\R^d)$ is a $k\alpha$-H\"older continuous two-parameter path and a \textit{reduced Chen relation} holds: For every $(s,u), (u,t)\in\Delta_T$,
\begin{align}\label{reducedChen}
\Xb_{s,t} = \sym{\Xb_{s,u} \otimes \Xb_{u,t}}.
\end{align}

\begin{defi}
	Let $X$ be $\alpha$-H\"older continuous for $\alpha\in (0,1)$ and $\beta\in(0,1)^{d+1}$. A path $Y=(Y^0, Y^1,\dots, Y^n)\in C([0,T],\mathbb{S}_n(\R^d))$ is $\beta$-controlled by $X$ if there exists $C>0$, such that for all $(s,t)\in\Delta_T$,
	\begin{align*}
		\sum_{l=1}^n \vert R_{s,t}^{X,k}\vert^{\lb\sum_{k=1}^n \beta_k -\sum_{k=n-l+1}^n\beta_k\rb^{-1}} \le C\vert t-s\vert,
	\end{align*}
	where the $kth$ remainder is
	\begin{align*}
		R_{s,t}^{X, k} := Y^k(t) - \sum_{l=k}^n \langle Y^l(s), \X_{s,t}^{l-k}\rangle.
	\end{align*}
\end{defi}

\begin{lem}\label{lem:YXcontrolled}
	Let $X$ be $\alpha$-H\"older continuous for $\alpha\in (0,1)$ and $n$ be the smallest natural number such that $2\alpha + (n-1)\alpha^2>1$. Let further $F$ be as in Corollary \ref{cor:remainderestimate} to parameter $n+1$ and assume additionally that for each $k=1, \dots, n$, $\nabla^k F$ and $D\nabla^k F$ are also Lipschitz continuous for fixed times with bounded Lipschitz constants. Then $Y\in C([0,T],\mathbb{S}_n(\R^d))$ given by $Y^0\equiv1$ and for $s\in[0,T]$,
	\begin{align*}
		Y^k(s) = \nabla^k F(s, X)
	\end{align*}
	is $(1,\alpha, \alpha^2,\dots, \alpha^2)$-controlled by $\Xb$.
\end{lem}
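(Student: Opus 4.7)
The key idea is to apply Corollary \ref{cor:remainderestimate} to each of the functionals $\nabla^k F$ for $k = 1, \dots, n$, treated as a causal functional in its own right. The regularity hypotheses imposed on $F$ and its causal derivatives are exactly what is needed so that Corollary \ref{cor:remainderestimate} applies to $G = \nabla^k F$ with parameter $n' = n - k + 1$ and truncation index $l' = n - k$: indeed $\nabla^j G = \nabla^{k+j} F \in \mathbb{C}^{1,1}_b$ for $j = 1, \dots, n-k$ by the assumption that $F$ satisfies the hypotheses of the corollary at parameter $n+1$, and the added Lipschitz-in-space hypotheses on $\nabla^k F$ and $D\nabla^k F$ supply the remaining conditions. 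After reindexing via $l = k+j$, the resulting estimate reads
\begin{align*}
\left| \nabla^k F(t,X) - \int_s^t D\nabla^k F(r,X)\,\dd r - \sum_{l=k}^{n} \langle \nabla^l F(s,X), \X^{l-k}_{s,t} \rangle \right| \lesssim |t-s|^{\alpha + (n-k)\alpha^2} + |t-s|^{1+\alpha} + |t-s|^{(n-k+1)\alpha},
\end{align*}
which shows that the Taylor remainder $R^{X,k}_{s,t}$ from the controlled-path definition differs from $\int_s^t D\nabla^k F(r,X)\,\dd r$ by a quantity of the above order.

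Combining this with the trivial bound $|\int_s^t D\nabla^k F(r,X)\,\dd r| \le |D\nabla^k F|_\infty \cdot |t-s|$, one obtains for $|t-s| \le 1$
\begin{align*}
|R^{X,k}_{s,t}| \lesssim |t-s|^{\min(1,\, \alpha + (n-k)\alpha^2,\, (n-k+1)\alpha)}.
\end{align*}
The next step is to verify that these exponents satisfy the requirement imposed by $(1,\alpha,\alpha^2,\dots,\alpha^2)$-control. Unpacking the definition, the exponent demanded of the $l$-th remainder is $\gamma_l := \alpha + (n-l)\alpha^2$. A short calculation using $\alpha \le 1$ shows $(n-l+1)\alpha \ge \gamma_l$, and the identity $\gamma_l$ itself is one of the three terms in the minimum, so each summand from the estimate dominates $|t-s|^{\gamma_l}$. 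The range $|t-s| \ge 1$ is handled by the boundedness of $\nabla^k F$, and the $Y^0 \equiv 1$ component is automatically consistent since the controlled sum starts at $l=1$.

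The delicate point, and the reason for the specific choice of $n$ in the statement, is showing $\gamma_1 \le 1$ so that the $|t-s|^1$ contribution from the time integral is itself dominated by $|t-s|^{\gamma_1}$ for small $|t-s|$. The minimality of $n$ gives $2\alpha + (n-2)\alpha^2 \le 1$, and therefore $\gamma_1 = \alpha + (n-1)\alpha^2 = (2\alpha + (n-2)\alpha^2) - \alpha(1-\alpha) \le 1 - \alpha(1-\alpha) < 1$, while $\gamma_l \le \gamma_1$ for $l \ge 1$. The tightest instance is $l = n$, where Corollary \ref{cor:remainderestimate} yields only $\alpha$-Hölder regularity on $R^{X,n}$; this matches exactly $\gamma_n = \alpha$ and is ultimately why the exponent $\alpha^2$ (rather than anything larger) is forced in the controlled tuple.
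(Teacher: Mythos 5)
Your proof is correct, and its first half coincides with the paper's: you apply Corollary \ref{cor:remainderestimate} to $G=\nabla^k F$ with parameter $n-k+1$ and truncation index $n-k$, check that the regularity hypotheses propagate to $G$, and use the minimality of $n$ to see that $\alpha+(n-k)\alpha^2$ is the smallest of the exponents $1$, $\alpha+(n-k)\alpha^2$, $(n-k+1)\alpha$; you are in fact more explicit than the paper about absorbing the term $\int_s^t D\nabla^k F(r,X)\,\dd r$, which the paper only handles implicitly through the observation $\alpha+(n-k)\alpha^2\le 1$. Where you genuinely diverge is the globalization step. Corollary \ref{cor:remainderestimate} only gives the bound for $\vert t-s\vert\le 1$, and the paper extends it to all of $\Delta_T$ by an iteration in the spirit of Exercise 4.5 of Friz--Hairer: partition $[s,t]$ into $N$ pieces of length $h$, quantify the failure of additivity of $R^{X,k}$ via the reduced Chen relation \eqref{reducedChen} and the multinomial theorem, and then set $h=1$. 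You instead note that $R^{X,k}$ is uniformly bounded on $\Delta_T$ (boundedness of the $\nabla^l F$ and of $\X^{l-k}$ on the compact horizon $[0,T]$) so that $\vert R^{X,k}_{s,t}\vert\le M\le M\vert t-s\vert^{\gamma_k}$ whenever $\vert t-s\vert\ge 1$; this is simpler and perfectly valid here, the paper's iteration being what one would need if uniform boundedness were unavailable. Two small points: the phrase ``each summand dominates $\vert t-s\vert^{\gamma_l}$'' should read ``is dominated by''; and deriving $2\alpha+(n-2)\alpha^2\le 1$ from minimality presupposes $n\ge 2$ (for $n=1$ one has $\gamma_1=\alpha<1$ directly, or note $2\alpha-\alpha^2=1-(1-\alpha)^2\le 1$ identically) --- the paper's own argument has the same implicit caveat. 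You also do not record that each $Y^k(s)=\nabla^k F(s,X)$ is a symmetric tensor depending continuously on $s$, which the paper dispatches in one sentence by reference to the proof of Theorem \ref{theo:taylor}; worth a line for completeness.
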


\begin{proof}
	In the proof of Theorem \ref{theo:taylor} it was discussed that $Y^k\in\mathrm{Sym}_k(\R^d)$. 
	For $k=1,\dots,n$ Corollary \ref{cor:remainderestimate} applied for $\nabla^k F\in\mathbb{C}_b^{1,(n+1-k)}$ with $l=n-k$ and $(s,t)\in\Delta_T$ with $\vert t-s\vert\le 1$, shows that
	\begin{align*}
		\vert R_{s,t}^{X, k}\vert \lesssim \vert u-s\vert^{\alpha + (n-k)\alpha^2}.
	\end{align*}
	The other terms in \eqref{cor:remainderestimate:eq} don't appear since $\alpha\in(0,1)$ and the minimality of $n$ imply $\alpha + (n-k)\alpha^2 < (n-k+1) \alpha$ and $\alpha + (n-k)\alpha^2<1$. 	\add{$1\ge\alpha + (n-k)\alpha^2$, if not for some $k$, then $1 <\alpha +(n-1)\alpha^2$. But then $2\alpha + (n-2)\alpha^2 > \alpha + (n-1)\alpha^2 > 1$, which is a contradiction to $n$ smallest natural number satisfying $2\alpha + (n-1) \alpha^2 > 1$.}
	To iterate the bound similar to \parencite[Exercise 4.5]{RoughBook}, we need to take into account that the remainders are not additive, but almost.\\
	Assume more generally we have a bound for intervals not longer than $h>0$. Let $\vert t-s\vert >h$, set $t_i =(s+ih)\wedge t$ and $N\le \vert t-s\vert/h +1$ the number of subintervals $[t_j,t_{j+1}]$. It holds that
	\begin{align*}
		\vert R_{s,t}^{X,k}\vert \le \Big\vert R_{s,t}^{X,k}-\sum_{j=0}^{N-1} R_{t_j,t_{j+1}}^{X,k}\Big\vert + \sum_{j=0}^{N-1} \vert R_{t_j,t_{j+1}}^{X,k}\vert
	\end{align*}
	with $\vert R_{t_j,t_{j+1}}^{X,k}\vert \lesssim h^{\alpha+(n-k)\alpha^2}$ and 
	\begin{align*}
		R_{s,t}^{X,k}-\sum_{j=0}^{N-1} R_{t_j,t_{j+1}}^{X,k} = \sum_{l=k+1}^n \lb \sum_{j=0}^{N-1} \la Y^k(t_j), \X_{t_j, t_{j+1}}^{l-k}\ra -\la Y^k(s),\X^{l-k}_{s,t}\ra\rb.
	\end{align*}
	On the one hand by Corollary \ref{cor:remainderestimate} applied for $\nabla^k F\in\mathbb{C}_b^{1,1}$ with $l=0$,
	\begin{align*}
		\vert \nabla^k F(t_j, X)-\nabla^k F(s, X)\vert \lesssim \vert t_j-s\vert^\alpha.
	\end{align*}
	Thus
	\begin{align*}
		\vert \la Y^k(t_j)-Y^k(s), \X_{t_j, t_{j+1}}^{l-k}\ra\vert 
		= \vert \la \nabla^k F(t_j, X)-\nabla^k F(s, X), \X_{t_j, t_{j+1}}^{l-k}\ra\vert \lesssim \vert t_j-s\vert^{\alpha} h^{(l-k)\alpha}.
	\end{align*}
	On the other hand consider the differences $\sum_{j=0}^{N-1} \X^{l-k}_{t_j,t_{j+1}} - \X^{l-k}_{s,t}$. For $l=k+1$, $\X^{l-k} =\X^1$ is additive, so $\sum_{j=0}^{N-1} \X^1_{t_j,t_{j+1}} - \X^1_{s,t}=0$. Moreover iterating the reduced Chen relation \eqref{reducedChen} and using that the symmetric tensor product is associative, it holds that
	\begin{align*}
		\Xb_{s,t} = \sym{\otimes_{j=0}^{N-1} \Xb_{t_j, t_{j+1}}}.
	\end{align*}
	This implies for $l>k+1$ that
	\begin{align*}
		&\sum_{j=0}^{N-1} \X^{l-k}_{t_j,t_{j+1}} - \X^{l-k}_{s,t}
		= \sum_{j=0}^{N-1} \X^{l-k}_{t_j,t_{j+1}} - \proj_{l-k}\sym{\otimes_{j=0}^{N-1} \Xb_{t_j, t_{j+1}}}\\
		&= - \sum_{\substack{k_0+\dots+k_{N_1} = l-k\\ 0\le k_j< l-k}} \sym{\X_{t_0, t_1}^{k_0}\otimes \dots\otimes \X^{k_{N-1}}_{t_{N-1}, t_N}}.
	\end{align*}
	It follows for $k=l+2,\dots,n$ that 
	\begin{align*}
		&\Big\vert \la \nabla^k F(s,X), \sum_{j=0}^{N-1} \X^{l-k}_{t_j,t_{j+1}} - \X^{l-k}_{s,t}\ra \Big\vert
		\lesssim \sum_{\substack{k_1+\dots+k_N = l-k\\ 0\le k_j< l-k}} \prod_{j=0}^{N-1} \vert \X_{t_j, t_{j+1}}^{k_j}\vert\\
		&\le \sum_{\substack{k_1+\dots+k_N = l-k\\ 0\le k_j< l-k}} \prod_{j=0}^{N-1} \frac{1}{k_j!} h^{k_j\alpha} = \frac{1}{(l-k)!} (N^{l-k}-N)h^{(l-k)\alpha}
	\end{align*}
	by the multinomial theorem. All together we showed that
	\begin{align*}
		\vert R_{s,t}^{X,k}\vert
		\lesssim \sum_{l=k+1}^n N\vert t-s\vert^\alpha h^{(l-k)\alpha} + \sum_{l=k+2}^n \frac{1}{(l-k)!} N^{l-k-1}(N-1)h^{(l-k)\alpha} + Nh^{\alpha+(n-k)\alpha^2}.
	\end{align*}
	Using that $N\le\vert t-s\vert/h +1 = (\vert t-s\vert +h)h^{-1}< 2\vert t-s\vert h^{-1}$ yields
	\begin{align*}
		\vert R_{s,t}^{X,k}\vert\lesssim \vert t-s\vert^{1+\alpha} \sum_{l=k+1}^n
		h^{(l-k)\alpha-1} + \sum_{l=k+2}^n	\vert t-s\vert^{l-k} h^{(l-k)\alpha-1} + \vert t-s\vert h^{\alpha+(n-k)\alpha^2-1}.
	\end{align*}
	Dividing by $\vert t-s\vert^{\alpha+(n-k)\alpha^2}$ and recalling that $\alpha+(n-k)\alpha^2<1$, it follows that together with $h=1$, that $R^{X,k}$ is $\alpha+(n-k)\alpha^2$-H\"older continuous. 
\end{proof}

\begin{theo}[Rough Functional It\^{o} Formula]\label{theo:roughfuncInt}
	Let $X$ be $\alpha$-H\"older continuous for $\alpha\in (0,1)$ and $n$ be the smallest natural number such that $2\alpha + (n-1)\alpha^2>1$. Let further $F$ be as in Lemma \ref{lem:YXcontrolled}.
	Then 
	\begin{align}\label{theo:roughfuncInt:eq}
		\int_0^T \nabla F(u, X) \dd \Xb(u) := \lim_{\vert \partition\vert \to 0} \sum_{[s,t]\in\partition} \sum_{k=1}^{n} \langle\nabla^k F(s, X),\X^k_{s,t}\rangle,
	\end{align}
	is a well defined limit.
Moreover if $F$ satisfies Corollary \ref{cor:remainderestimate} to parameter $\tilde n$ such that $\alpha + (\tilde n-1)\alpha^2>1$, then 
\begin{align*}
F(T, X) = F(0, X) + \int_0^T DF(u, X) \dd u + \int_0^T \nabla F(u, X) \dd \Xb(u).
\end{align*}
\end{theo}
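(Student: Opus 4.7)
The strategy is to apply the sewing lemma to the two-parameter germ
\[
\Xi_{s,t} \;:=\; \sum_{k=1}^{n} \la \nabla^k F(s,X),\, \X^k_{s,t}\ra,
\]
and then, using Corollary \ref{cor:remainderestimate}, to identify the resulting integral with $F(T,X)-F(0,X)-\int_0^T DF(u,X)\dd u$.

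For the sewing step I would first unwind the coboundary $\delta\Xi_{s,u,t}:=\Xi_{s,t}-\Xi_{s,u}-\Xi_{u,t}$ by means of the reduced Chen relation \eqref{reducedChen}, which projected to level $k$ reads $\X^k_{s,t}=\sum_{j=0}^{k}\sym{\X^j_{s,u}\otimes \X^{k-j}_{u,t}}$. Since $\nabla^k F(s,X)$ is a symmetric $k$-tensor (as observed in the proof of Theorem \ref{theo:taylor}) the outer symmetrization may be dropped under the pairing, while the boundary terms $j\in\{0,k\}$ reproduce $\X^k_{u,t}$ and $\X^k_{s,u}$. Reindexing the remaining middle terms by the level $m$ of $\X^m_{u,t}$ yields, after a brief combinatorial bookkeeping and setting $Y^l:=\nabla^l F(\cdot,X)$,
\[
\delta\Xi_{s,u,t} \;=\; -\sum_{m=1}^{n} \la R^{X,m}_{s,u},\, \X^m_{u,t}\ra,
\]
so that the coboundary is controlled by precisely the remainders of Lemma \ref{lem:YXcontrolled}.

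Feeding $|R^{X,m}_{s,u}|\lesssim|u-s|^{\alpha+(n-m)\alpha^2}$ (Lemma \ref{lem:YXcontrolled}) and $|\X^m_{u,t}|\lesssim|t-u|^{m\alpha}$ (from \eqref{notationlevel}) into this identity gives
\[
|\delta\Xi_{s,u,t}|\;\lesssim\; \sum_{m=1}^{n} |t-s|^{(m+1)\alpha+(n-m)\alpha^2}.
\]
The exponent $(m+1)\alpha+(n-m)\alpha^2=\alpha+n\alpha^2+m\alpha(1-\alpha)$ is increasing in $m$, so for $|t-s|\le 1$ the dominant contribution comes from $m=1$ with exponent $2\alpha+(n-1)\alpha^2$, which is strictly greater than $1$ by the very choice of $n$. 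Together with the trivial continuity bound $|\Xi_{s,t}|\lesssim |t-s|^\alpha$, this puts $\Xi$ into the scope of the sewing lemma, whose application delivers a unique additive two-parameter path $\mathrm{I}\Xi$ on $\Delta_T$ with $|\mathrm{I}\Xi_{s,t}-\Xi_{s,t}|=O(|t-s|^{2\alpha+(n-1)\alpha^2})$. In particular the Riemann-sum limit in \eqref{theo:roughfuncInt:eq} exists and equals $\mathrm{I}\Xi_{0,T}$.

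For the functional It\^o formula, an elementary comparison with the minimality of $n$ shows that the hypothesis $\alpha+(\tilde n-1)\alpha^2>1$ forces $\tilde n\ge n+1$ (indeed one checks $\alpha+(n-1)\alpha^2<1$ in both the cases $n=1$ and $n\ge 2$), and a separate short case split on $\alpha\le 1/2$ versus $\alpha>1/2$ gives $(n+1)\alpha>1$. Applying Corollary \ref{cor:remainderestimate} with parameter $\tilde n$ and $l=n$ then yields
\[
\Big|F(t,X)-F(s,X)-\int_s^t DF(r,X)\dd r-\sum_{k=1}^{n}\la\nabla^k F(s,X),\X^k_{s,t}\ra\Big| \;\lesssim\; |t-s|^{\alpha+(\tilde n-1)\alpha^2}+|t-s|^{1+\alpha}+|t-s|^{(n+1)\alpha},
\]
whose three exponents all strictly exceed $1$. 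Summing over a partition $\partition$ of $[0,T]$ and sending $|\partition|\to 0$, the right-hand side vanishes; the left-hand side telescopes to $F(T,X)-F(0,X)-\int_0^T DF(r,X)\dd r$ and must therefore coincide with the sewn integral $\int_0^T\nabla F(u,X)\dd\Xb(u)$.

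The main obstacle is the algebraic identity for $\delta\Xi_{s,u,t}$: one has to carefully unwind the reduced Chen relation, exploit the symmetry of $\nabla^k F(s,X)$ to discard the outer $\sym{\cdot}$, and reindex the resulting double sum by the level of the $\X^{\cdot}_{u,t}$ factor so that the controlled-path remainders $R^{X,m}_{s,u}$ emerge intact. Once this algebraic step is in place, both the sewing verification and the It\^o formula reduce to bookkeeping of exponents, using Lemma \ref{lem:YXcontrolled} and Corollary \ref{cor:remainderestimate}.
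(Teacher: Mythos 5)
Your proposal is correct and follows essentially the same route as the paper: the same germ $\Xi_{s,t}$, the same reduction of $\delta\Xi_{s,u,t}$ to $-\sum_k\la R^{X,k}_{s,u},\X^k_{u,t}\ra$ via the reduced Chen relation and symmetry of $\nabla^k F$, the same exponent count $(k+1)\alpha+(n-k)\alpha^2\ge 2\alpha+(n-1)\alpha^2>1$ feeding into the sewing lemma, and the same application of Corollary \ref{cor:remainderestimate} with $l=n$ to obtain the It\^o formula (your telescoping over partitions is equivalent to the paper's observation that the defect is H\"older of exponent greater than one, hence zero). The only cosmetic difference is that $(n+1)\alpha>1$ follows directly from $(n+1)\alpha\ge 2\alpha+(n-1)\alpha^2$ without any case split on $\alpha$.
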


\begin{proof}
	We show existence of the rough integral by adapting the proof of \parencite[Proposition 4.10] {contPathwiseIntegrationChange2019} to our path-dependent  setting. 
Set for $(s,t)\in\Delta_T$, $k=1,\dots, n$,
\begin{align}\label{notationremainer}
	\Xi_{s,t}^X := \sum_{k=1}^n \langle \nabla^k F(s, X), \X_{s,t}^k \rangle.
\end{align}	
As usual in rough path theory \eqref{theo:roughfuncInt:eq} follows from the sewing lemma (compare e.g.\ \parencite[Theorem 4.3]{lyonsDifferentialEquationsDriven2007}) once we show that for every $(s,u), (u,t)\in\Delta_T$, 
	\begin{align}\label{theo:roughfuncInt:eq:1}
		\vert \Xi_{s,t}-\Xi_{s,u}-\Xi_{u,t}\vert \lesssim \vert t-s\vert^\theta 
	\end{align}
	for some $\theta >1$.
Recalling that $\nabla^k F(s, X)$ is symmetric, the reduced Chen relation \eqref{reducedChen} implies that
	\begin{align*}
	\la \nabla^k F(s,X), \X_{s,t}^k\ra 
	= \la \nabla^k F(s,X), \proj_k\sym{\Xb_{s,u}\otimes\Xb_{u,t}}\ra
	= \sum_{l=0}^k \la \nabla^k F(s,X), \X_{s,u}^{k-l}\otimes\X_{u,t}^{l} \ra.
	\end{align*}
Plugging that into $\Xi_{s,t}$ and interchanging the summation order, it follows that
\begin{align*}
\Xi_{s,t} - \Xi_{s,u} = \sum_{k=1}^n \sum_{l=k}^n \la\nabla^l F(s,X), \X_{s,u}^{l-k}\otimes\X_{u,t}^k \ra.
\end{align*}
Therefore 
\begin{align}\label{theo:roughfuncInt:eq:2}
 \Xi_{s,t}-\Xi_{s,u}-\Xi_{u,t} = - \sum_{k=1}^n \la R_{s,u}^{X, k}, \X_{u,t}^k\ra.
\end{align}
	Lemma \ref{lem:YXcontrolled} shows for $k=1, \dots, n$, that $R^{X,k}$ is $\alpha + (n-k)\alpha^2$-H\"older continuous.
	And recalling that $\X^k$ is $k\alpha$-H\"older continuous, we get that
	\begin{align*}
	\vert \la R_{s,u}^{X, k}, \X_{u,t}^k\ra \vert \lesssim \vert t-s\vert^{(k+1)\alpha + (n-k)\alpha^2}.
	\end{align*}
Since
	\begin{align}\label{theo:roughfuncInt:eq:1a}
		(k+1)\alpha + (n-k)\alpha^2 = n\alpha^2 + \alpha + k(\alpha - \alpha^2) > 2\alpha + (n-1)\alpha^2,
	\end{align}
and by assumption $2\alpha+(n-1)\alpha^2>1$, \eqref{theo:roughfuncInt:eq:1} now follows from \eqref{theo:roughfuncInt:eq:2}.

It is left to show the functional It\^o formula. Applying  Corollary \ref{cor:remainderestimate} to $F\in\mathbb{C}_b^{1,\tilde n}$ with $l=n$, shows that 
\begin{align*}
&\left\vert F(t, X) - F(s, X) - \int_s^t DF(u, X) \dd u - \sum_{k=1}^{n} \langle\nabla^k F(s, X),\X^k_{s,t}\rangle\right\vert \lesssim \vert t-s\vert^{1 + \alpha} + \vert t-s\vert^{(n+1)\alpha} + \vert t-s\vert^{\alpha+(\tilde n-1)\alpha^2}.
\end{align*}
By assumption $\alpha+(\tilde n-1)\alpha^2$ and $(n+1)\alpha$ are both greater one.
Together with the estimate from the sewing lemma it follows that $t\mapsto F(t, X) - F(0,X) - \int_0^t DF(u, X) \dd u - \int_0^t \nabla F(u,X)\dd\Xb(u)$ is $\tilde \theta$-H\"older continuous for some $\tilde \theta>1$. Consequently the map is constant zero.
\end{proof}

\begin{rem} Clearly $\tilde n\ge n+1$.
For Brownian sample paths the theorem can be applied with $n=2$ and $\tilde n = 4$. (Indeed $2\alpha + \alpha^2 <1 \Leftrightarrow \alpha > \sqrt{2} -1 \approx 0,41$ and $\alpha + 3\alpha^2 >1 \Leftrightarrow \alpha > (\sqrt{13} - 1)/6\approx 0,43$).
So for the existence of the integral it is sufficient that the functional $F$ has $3$-causal space derivatives, but for the It\^o formula we need $4$-causal space derivatives. This additional regularity is comparable to the regularity change in the standard setting \parencite[Lemma 4.1, Proposition 5.8]{RoughBook}. But there are regimes of $\alpha$ where the the change in regularity exceeds one. For example for $\alpha\in(\sqrt{2}-1, (\sqrt{16}-1)/6]$, it holds $n+1=3$ and $\tilde n=5$. This gap increases for $\alpha\to 0$.
It remains an open question if the loss of regularity from $\alpha$ to $\alpha^2$ in Corollary \ref{cor:remainderestimate} can be circumvented.
\end{rem}

\begin{exmp} 
We can easily change to  
\begin{align*}
\tilde\X^{n}_{s,t} := \X^{n}_{s,t} - \frac{1}{n!} \mu((s,t]),
\end{align*}
for a symmetric tensor-valued measure $\mu = \sum_{w} \mu^w e_w$ (over words $w$ of length $n$ in the alphabet $\A$), such that $\mu^w$ 
are finite signed measures with no atoms. Then it is immediate that
\begin{align*}
F(T, X) &= F(0, X) + \int_0^T DF(u, X) \dd u + \int_0^T \nabla F(u, X) \dd \Xb(u) - \frac{1}{n!} \int_0^T \la\nabla^{n}F(u, X), \dd\mu(u)\ra.
\end{align*}
The measure $\mu$ could be for example a suitable notion of finite $p$-variation, cf.\ \parencite[Definition 4.1]{contPathwiseIntegrationChange2019} or the stochastic quadratic variation if $X$ is a sample path of a semimartingale. 
\end{exmp}

\subsection*{Acknowledgement} The author would like to thank Christa Cuchiero, Xin Guo and Francesca Primavera for making available the slides of Christa Cuchiero's talk on \parencite{cuchieroFunctionalItoformulaTaylor} at TU Berlin, 2023. Moreover, Nicolas Perkowski for his helpful comments. 
\printbibliography
\end{document}